\title{\bf Sharp pointwise estimates for the gradients of solutions  to linear parabolic second order equation in the layer}
\author{\sc{Gershon Kresin$^a\!\!$}
\thanks{Corresponding author. E-mail: kresin@ariel.ac.il}$\;\;$
 and \sc{Vladimir Maz'ya$^b$}
\thanks{E-mail: vladimir.mazya@liu.se}$\;\;$ 
\\ \\
{\it{$^a$Department of Mathematics, Ariel University, Ariel 40700, Israel}}\\
{\it{$^b$Department of Mathematical Sciences, University of Liverpool,
M$\&$O Building, Liverpool,}}\\ 
{\it{L69 3BX, UK; Department of Mathematics, Link\"oping University,SE-58183 Link\"oping, }}\\
{\it{{\hskip -19mm}Sweden; }}
{\it{RUDN University, 6 Miklukho-Maklay St., Moscow, 117198, Russia}}\\
}
{ \date\ }
\numberwithin{equation}{section}
\newtheorem{lemma}{Lemma}
\newtheorem{theorem}{Theorem}
\newtheorem{proposition}[theorem]{Proposition}
\newenvironment{remark}{{\bf Remark}}
\newcommand{\bs}{\boldsymbol}
\newcommand{\bv}{\bs |\mskip-5mu \bs |\mskip-5mu \bs |}  
\newcommand{\nl}{\lVert}
\newcommand{\nr}{\rVert}
\begin{document}
\maketitle
%\LARGE
\large

\vspace{10mm}

%%%%%%%%%%%%%%%%%%%%%%%%%%%%%%%%%%%%%%%%%%%%%%%%%%%%%%%%%%%%%%%%%%
{\bf Abstract.} We deal with solutions of the Cauchy problem to  linear both homogeneous and nonhomogeneous parabolic second order equations with real constant coefficients in the layer ${\mathbb R}^{n+1}_T={\mathbb R}^n\times (0, T)$, where $n\geq 1$ and $T<\infty$. The homogeneous equation is considered with initial data in $L^p({\mathbb R}^n)$, $1\leq p \leq \infty $. For the nonhomogeneous equation we suppose that initial function is equal to zero and the function in the right-hand side belongs to $f\in L^p({\mathbb R}^{n+1}_T)\cap C^\alpha \big (\overline{{\mathbb R}^{n+1}_T} \big ) $ , $p>n+2$ and $\alpha \in (0, 1)$. Explicit formulas for the sharp coefficients in pointwise estimates for the length of the gradient to solutions to these problems are obtained.
%%%%%%%%%%%%%%%%%%%%%%%%%%%%%%%%%%%%%%%%%%%%%%%%%%%%%%%%%%%%%%%%%%
\\
\\
{\bf Keywords:} Cauchy problem, pointwise estimates for the gradient, parabolic equation of the second order with constant coefficients
\\
\\
{\bf AMS Subject Classification:} Primary 35K15; Secondary 35E99 
\\
%%%%%%%%%%%%%%%%%%%%%%%%%%%%%%%%%%%%%%%%%%%%%%%%%%%%
\section{Introduction}\label{S_1}
%%%%%%%%%%%%%%%%%%%%%%%%%%%%%%%%%%%%%%%%%%%%%%%%%%%%
Linear parabolic equations of the second order find numerous applications. They model convective heat and mass transfer processes (e.g. \cite{PZ}, Sect. 2.5.5), diffusion of gases (e.g. \cite{TS}, Ch. VI, Sect. 2) etc.

It is known that bounded solutions $u(x, t)$ of the linear parabolic equation of the second order with constant real coefficients
\begin{equation} \label{Eq_0.1}
\frac{\partial u}{\partial t}= \sum_{j,k=1}^n a_{jk}\frac{\partial^2 u}{\partial x_j \partial x_k}+
\sum_{j=1}^n b_{j}\frac{\partial u}{\partial x_j}+c u\;,
\end{equation}
where $c\leq 0$, $(x, t) \in{\mathbb R}^{n+1}_T={\mathbb R}^n\times (0, T)$, satisfy the weak maximum principle 
$$
\sup_{{\mathbb R}^{n+1}_T}|u|=\sup_{y \in {\mathbb R}^n}|u(y, 0)|\;.
$$
In the general case of any real $c$, the following pointwise estimate with the best possible coefficient holds
\begin{equation} \label{Eq_0.1A}
|u(x, t)|\leq e^{ct}\sup_{y\in {\mathbb R}^n}|u(y, 0)|\;,
\end{equation}
where $(x, t)$ is an arbitrary point of the layer ${\mathbb R}^{n+1}_T$, $T< \infty $. As for the sharp pointwise estimate for
$|\nabla_x u(x,t)|$, where $|\cdot |$ means the Euclidean length of a vector in ${\mathbb R}^n$, it was unknown  even for the heat equation.
Here and elsewhere we say that the estimate  is sharp if the coefficient in front of the norm in the  majorant part of the inequality can not be diminished. 

In the present paper, we extend our study of sharp estimates 
for solutions to the Laplace, Lam\'e, Stokes and heat equations (see \cite{KM1}-\cite{KM3} and references there) as well as for analytic functions \cite{KM}. Here we obtain sharp pointwise estimates for $|\nabla_x u(x,t)|$, 
where $u$ solves the Cauchy problem for linear parabolic equations of the second order with constant real coefficients in ${\mathbb R}^{n+1}_T$, where $T<\infty$ and $n\geq 1$. 

Now, we will describe our main results.  
Section \ref{S_2} is devoted to explicit formulas for solutions of the Cauchy problem in ${\mathbb R}^{n+1}_T$ for both homogeneous 
and nonhomogeneous parabolic equations with constant real coefficients. 

In Section \ref{S_3}, we consider a solution of the Cauchy problem 
\begin{eqnarray} \label{H}
\left\{\begin{array}{ll}
\displaystyle{\frac{\partial u}{\partial t}= \sum_{j,k=1}^n a_{jk}\frac{\partial^2 u}{\partial x_j \partial x_k}}+\sum_{j=1}^n b_{j}\frac{\partial u}{\partial x_j}
+c u& \quad{\rm in}\; {\mathbb R}^{n+1}_T, \\
       \\
\displaystyle{ u\big |_{t=0}=\varphi }\;,  
\end{array}\right .
\end{eqnarray}
where $A=((a_{jk}))$ is a symmetric positive definite matrix of order $n$ and $\varphi\in L^p({\mathbb R}^n)$, $p \in [1, \infty]$.
The norm $\nl \cdot \nr_{p}$ in the space $L^p({\mathbb R}^n)$ is defined by
$$
\nl \varphi\nr_{p}=\left \{\int _ {{\mathbb R}^n} |  \varphi(x) |^p dx \right \}^{1/p}
$$
for $1\leq  p< \infty $, and 
$$
\nl \varphi \nr_{\infty} =\mbox{ess}\;\sup \{ | \varphi(x) |: x \in {{\mathbb R}^n } \}\;. 
$$ 

The explicit formula for the sharp coefficient 
\begin{equation} \label{Eq_1.3}
{\mathcal K}_{p, \ell}(t)=\frac{\big |A^{-1/2}\ell \big |}
{\big \{ 2^{n}\pi^{(n+p-1)/2}\det A^{1/2} \big \}^{1/p}}
\left \{ \frac{\Gamma \left (\frac{p'+1}{2} \right )} {p'^{(n+p')/2}} \right \}^{1/p'}\frac{e^{ct}}{t^{(n+p)/(2p)}}
\end{equation}
in the inequality 
\begin{equation} \label{Eq_1.3A}
\left | \frac{\partial u}{ \partial {\ell}}(x, t) \right |\leq {\mathcal K}_{p, \ell}(t) \nl \varphi \nr_{p}
\end{equation}
for solutions of the Cauchy problem (\ref{H}) is obtained, where $p^{-1}+p'^{-1}=1$, $(x, t)$ is an arbitrary point in the layer ${\mathbb R}^{n+1}_T$ and $\ell$ stands for a unit $n$-dimensional vector. As a consequence of (\ref{Eq_1.3}), the sharp coefficient 
\begin{equation} \label{Eq_1.3AB}
{\mathcal K}_p(t)=\max_{|\ell |=1}{\mathcal K}_{p, \ell}(t)
\end{equation}
in the inequality
\begin{equation} \label{Eq_1.3ABC}
\left | \nabla_x u(x, t) \right |\leq {\mathcal K}_p(t) \nl \varphi \nr_{p}
\end{equation}
is found. 
In particular, 
\begin{equation} \label{Eq_1.4}
{\mathcal K}_\infty(t)=\frac{\bv A^{-1/2} \bv }{\sqrt{\pi }}\;\frac{e^{ct}}{t^{1/2}}\;,
\end{equation}
where $\bv B \bv=\max_{|\ell|=1}|B\ell |$ is 
the spectral norm of the real-valued matrix $B$. It is known (e.g. \cite{LA}, sect. 6.3) that $\bv B \bv=\lambda ^{1/2}_B$, where 
$\lambda _B$ is the spectral radius of the matrix $B'B$. Here the
symbol $'$ denotes passage to the transposed matrix. 

As a special case of (\ref{Eq_1.4}) one has
$$
{\mathcal K}_\infty(t)=\frac{1}{\sqrt{a\pi }}\;\frac{e^{ct}}{t^{1/2}}
$$
for $A=aI$, where $I$ is the unit matrix of order $n$. 

In Section \ref{S_4} we consider a solution of the Cauchy problem 
\begin{eqnarray}  \label{NH}
\left\{\begin{array}{ll}
\displaystyle{\frac{\partial u}{\partial t}= 
\sum_{j,k=1}^n a_{jk}\frac{\partial^2 u}{\partial x_j \partial x_k}}+\sum_{j=1}^n b_{j}\frac{\partial u}{\partial x_j}
+c u+f(x,t)& \quad{\rm in}\; {\mathbb R}^{n+1}_T, \\
       \\
\displaystyle{ u\big |_{t=0}=0}\;,  
\end{array}\right . 
\end{eqnarray}
where $A=((a_{jk}))$ is a symmetric positive definite matrix of order $n$ and 
$f\in L^p({\mathbb R}^{n+1}_T)\cap C^\alpha\big (\overline{{\mathbb R}^{n+1}_T} \big ) $ with $p>n+2$ and $\alpha \in (0, 1)$. 
By $C^\alpha \big (\overline{{\mathbb R}^{n+1}_T} \big )$ we denote the space of functions $f(x, t)$ which are continuous and bounded in $\overline{{\mathbb R}^{n+1}_T}$ and locally H\"older
continuous with exponent $\alpha$ in $x\in {\mathbb R} ^n$, uniformly with respect to $t\in[0, T]$.
The space $L^p\big ({\mathbb R}^{n+1}_T\big )$ is endowed with the norm
$$
\nl f \nr_{p, T}=\left \{ \int_0^T\int _ {{\mathbb R}^n } \big | f(x, \tau )\big |^p dx d\tau\right \}^{1/p}
$$
for $1\leq  p< \infty $, and 
$$
\nl f \nr_{\infty,T} =\mbox{ess}\;\sup \{ | f(x, \tau) |:
x \in {{\mathbb R}^n },\; \tau \in (0, T) \}.
$$
 
The explicit formula for the sharp coefficient
\begin{equation} \label{Eq_1.5}
{\mathcal C}_{p, \ell}(t)=\frac{\big |A^{-1/2}\ell \big |}
{\big \{ 2^{n}\pi^{(n+p-1)/2}\det A^{1/2} \big \}^{1/p}}\left \{ \frac{\Gamma \left (\frac{p'+1}{2} \right )} {p'^{(n+p')/2}} \int_0^t \frac{e^{p'c\tau}}{\tau ^{(n(p'-1)+p' )/2}}d\tau\right \}^{1/p'}
\end{equation} 
in the inequality
\begin{equation} \label{Eq_1.5A} 
\left | \frac{\partial u}{ \partial {\ell}}(x, t) \right |\leq {\mathcal C}_{p, \ell}(t)\nl  f \nr_{p, t}
\end{equation} 
for solutions of the Cauchy problem (\ref{NH}) is found, where $(x, t)\in {\mathbb R}^{n+1}_T$. 
As a consequence of (\ref{Eq_1.5}), we arrive at the formula for the sharp coefficient
\begin{equation} \label{Eq_1.5AB}
{\mathcal C}_p(t)=\max_{|\ell |=1}{\mathcal C}_{p, \ell}(t)
\end{equation} 
in the inequality
\begin{equation} \label{Eq_1.5ABC}
\left | \nabla_x u(x, t) \right |\leq {\mathcal C}_p(t) \nl f \nr_{p, t}\;.
\end{equation} 
For instance, 
\begin{equation} \label{Eq_1.6}
{\mathcal C}_\infty(t)=\frac{\bv A^{-1/2}\bv}{\sqrt{\pi}}\;\int_0^t
\frac{e^{c\tau}}{\sqrt{\tau}}\; d\tau\;.
\end{equation}
In the particular case  $A=aI$,  formula (\ref{Eq_1.6}) takes the form
$$
{\mathcal C}_\infty(t)=\frac{1} {\sqrt{a\pi}}\;\int_0^t
\frac{e^{c\tau}}{\sqrt{\tau}}\; d\tau\;.
$$

It can be of interest that the sharp coefficients in (\ref{Eq_1.3}) and (\ref{Eq_1.5}) do not depend on the coefficient  vector $b=(b_1, \dots, b_n)$. 

%%%%%%%%%%%%%%%%%%%%%%%%%%%%%%%%%%%%%%%%%%%%%%%%%%%%%%%%%%%%%%%%%%
\section{Explicit formulas for solutions} \label{S_2}
%%%%%%%%%%%%%%%%%%%%%%%%%%%%%%%%%%%%%%%%%%%%%%%%%%%%%%%%%%%%%%%%%%

By $(x,  y)$ we mean the inner product of the vectors $ x$ and $ y$ in ${\mathbb R}^n$.
The Schwartz class of rapidly decreasing $C^{\infty}$-functions on ${\mathbb R}^n$ will be denoted by ${\mathcal S}(\mathbb R^n)$.

Since the matrices $A$ and $A^{-1}$ are positive definite, there exist positive definite matrices $A^{1/2}$ and $A^{-1/2}$ of order $n$ such that $\big ( A^{1/2} \big )^2=A$ and $\big ( A^{-1/2} \big )^2=A^{-1}$ (e.g. \cite{LA}, sect. 2.14).

We start with the Cauchy problem (\ref{H}) for the homogeneous equation. 

The assertion below can be proved analogously to the corresponding statement for the heat equation (e.g.\cite{Hu}, Th. 5.4; \cite{OO}, Sect. 4.8; \cite{MS},  Sect. 7.4). 
In the proof of this assertion we preserve only the formal arguments leading to the representation for a solution of problem (\ref{H}).

\begin{lemma} \label{L_1} Let $\varphi \in {\mathcal S}(\mathbb R^n)$. A 
solution $u$ of problem $(\ref{H})$ is given by
\begin{equation} \label{Eq_3.2}
u(x, t)=\int_{{\mathbb R}^n}G(x-y, t)\varphi(y)dy\;,
\end{equation}
where
\begin{equation} \label{Eq_3.3}
G(x, t)= \frac{e^{ct}}{\big ( 2\sqrt{\pi t}\big )^n\det A^{1/2} } e^ {-\frac{1}{4t}\left | A^{-1/2}(x+tb) \right |^2}\;.
\end{equation}
\end{lemma}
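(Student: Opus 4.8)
The plan is to verify directly that the function $u$ defined by the convolution $(\ref{Eq_3.2})$ with kernel $(\ref{Eq_3.3})$ solves the Cauchy problem $(\ref{H})$; since the lemma only asks for ``a solution,'' it suffices to produce this one and check the equation and the initial condition formally, as the statement explicitly permits. First I would reduce the general equation to a normalized heat equation by a change of variables. Setting $x = A^{1/2}\xi$ (equivalently $\xi = A^{-1/2}x$) turns the principal part $\sum a_{jk}\partial^2_{x_jx_k}$ into the Laplacian $\Delta_\xi$, because $A^{-1/2}AA^{-1/2}=I$. The drift term $\sum b_j\partial_{x_j}$ becomes a constant-coefficient first-order term $(A^{-1/2}b,\nabla_\xi)$, which can be removed by a Galilean shift $\xi \mapsto \xi + t\,A^{-1/2}b$ together with absorbing the zero-order term $cu$ via the factor $e^{ct}$. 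Tracking these substitutions through the classical heat kernel $(4\pi t)^{-n/2}e^{-|\xi|^2/(4t)}$ produces exactly $(\ref{Eq_3.3})$: the Jacobian of $x=A^{1/2}\xi$ contributes $\det A^{1/2}$ in the denominator, $(4\pi t)^{-n/2}=(2\sqrt{\pi t})^{-n}$, the shift replaces $x$ by $x+tb$ inside $|A^{-1/2}(\cdot)|^2$, and $e^{ct}$ is the zero-order correction.

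Having identified the kernel, I would carry out the two checks. For the equation: differentiate $G(x-y,t)$ in $t$ and in $x$ under the integral sign (justified since $\varphi\in\mathcal S(\mathbb R^n)$ and $G(\cdot,t)$ is Schwartz in $x$ for each $t>0$, with all derivatives integrable), and confirm $\partial_t G = \sum a_{jk}\partial^2_{x_jx_k}G + \sum b_j\partial_{x_j}G + cG$ by a direct computation; alternatively, and more cleanly, invoke that $G$ is obtained from the heat kernel by the explicit substitution above, so it automatically satisfies the transformed equation, which pulls back to $(\ref{H})$. For the initial condition: one shows $G(\cdot,t)$ is an approximate identity, i.e. $\int_{\mathbb R^n}G(x,t)\,dx = e^{ct}\to 1$ as $t\to 0^+$ (a Gaussian integral, using $\int_{\mathbb R^n}e^{-|A^{-1/2}z|^2/(4t)}dz = (2\sqrt{\pi t})^n\det A^{1/2}$ after the change of variables $z=A^{1/2}w$), and $G$ concentrates at the origin as $t\to 0^+$ since $x+tb\to x$ and the Gaussian scale $\sqrt t\to 0$; hence $u(x,t)\to\varphi(x)$ uniformly for $\varphi\in\mathcal S(\mathbb R^n)$.

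Since the excerpt states that ``in the proof of this assertion we preserve only the formal arguments leading to the representation,'' I would present the derivation via the change of variables and the Fourier transform as the streamlined route: applying the spatial Fourier transform to $(\ref{H})$ converts it to the ODE $\widehat u_t = (-\,\xi^{\mathsf T}A\xi + i\,(b,\xi) + c)\widehat u$ in $t$ with $\widehat u(\xi,0)=\widehat\varphi(\xi)$, whose solution is $\widehat u(\xi,t) = e^{(-\xi^{\mathsf T}A\xi + i(b,\xi)+c)t}\widehat\varphi(\xi)$; inverting the Fourier transform of the Gaussian factor $e^{(-\xi^{\mathsf T}A\xi + i(b,\xi))t}$ (a standard computation of completing the square, yielding precisely the exponential in $(\ref{Eq_3.3})$ up to the normalizing constant) and using that multiplication in Fourier space is convolution gives $(\ref{Eq_3.2})$--$(\ref{Eq_3.3})$.

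The main obstacle, modest in this setting, is the bookkeeping in the Fourier inversion of the complex Gaussian $e^{-t\xi^{\mathsf T}A\xi + it(b,\xi)}$: one must complete the square in $\xi$ with the linear term $it(b,\xi)$ present and correctly diagonalize via $A^{1/2}$ to extract the factor $\det A^{1/2}$ and the shifted argument $x+tb$. Everything else — the differentiation under the integral sign, the approximate-identity argument for the initial data, and the reduction to the heat equation — is routine and parallels the cited treatments of the heat equation in \cite{Hu}, \cite{OO}, \cite{MS}.
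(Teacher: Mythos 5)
Your proposal is correct, and the ``streamlined route'' you present --- applying the spatial Fourier transform to reduce (\ref{H}) to the ODE $\hat u_t=\{-(A\xi,\xi)+i(b,\xi)+c\}\hat u$, solving it, and inverting the Gaussian factor via completion of the square to obtain the kernel (\ref{Eq_3.3}) --- is precisely the paper's own (formal) proof of Lemma \ref{L_1}. The additional change-of-variables reduction to the heat equation and the approximate-identity check of the initial data are extra material not needed here, but they do not detract from the argument.
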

\begin{proof} Applying the Fourier transform
\begin{equation} \label{eq_4.2}
\hat{u}(\xi, t)= \frac{1}{(2\pi)^{n/2}} \int_{\mathbb R ^n} e ^{-i( x,\xi)}u(x, t)dx
\end{equation}
to the Cauchy problem (\ref{H}), we obtain
\begin{equation} \label{Eq_4.3}
\frac{d\hat{u}}{dt}=\big \{-(A\xi, \xi)+i (b, \xi)+c \big \}\hat{u}\;,\;\;\;\;\;\hat{u}(\xi,0)=\hat{\varphi}(\xi)\;.
\end{equation}
The solution of problem (\ref{Eq_4.3}) is
\begin{equation} \label{Eq_4.4}
\hat{u}(\xi, t)=\hat{\varphi }(\xi)e^{ \{-(A\xi, \xi))+i (b, \xi)+c  \}t}\;.
\end{equation}
By the inverse Fourier transform
$$
u(x, t)=\frac{1}{(2\pi)^{n/2}}\int_{\mathbb R ^n} e ^{i( x, \xi)}\hat{u}(\xi ,t)d\xi, 
$$
we deduce from (\ref{Eq_4.4})
\begin{eqnarray} \label{Eq_4.5a}
& & u(x, t)= \frac{1}{(2\pi)^{n/2}}\int_{\mathbb R ^n} e ^{i( x,\xi)}\hat{\varphi}(\xi)e^{ \{-(A\xi, \xi))+i (b, \xi)+c  \}t}d\xi \nonumber \\
& & = \frac{e^{ct}}{(2\pi)^{n}}\int_{\mathbb R ^n} e ^{i( x,\xi)}e^{ \{-(A\xi, \xi))+i (b, \xi) \}t}  \left \{ \int_{\mathbb R ^n} e ^{-i( y, \xi)}{\varphi}(y)dy \right \} d\xi \nonumber\\
& & =\frac{e^{ct}}{(2\pi)^{n}}\int_{\mathbb R ^n} \left\{ \int_{\mathbb R ^n} e^{i( x- y+tb, \xi)}e^{-(tA\xi, \xi)} d\xi \right\} {\varphi}(y)dy.  
\end{eqnarray}
Let us denote
\begin{equation} \label{Eq_4.6}
    G(x,t)=  \frac{e^{ct}}{(2\pi)^n}\int_{\mathbb R ^n} e^{-(tA\xi, \xi)+i( x+tb, \xi)}d\xi .
\end{equation}

The known formula (e.g.\cite{VL}, Ch. 2, Sect. 9.7)
$$
\int_{\mathbb R ^n} e^{-(M\xi, \xi)+i( \zeta,\xi)}d\xi=
\frac{\pi^{n/2}}{\sqrt{\det M}}e^{-\frac{1}{4}( \zeta, M^{-1}\zeta)},
$$
where $M$ is a symmetric positive definite matrix of order $n$,
together with (\ref{Eq_4.6}) leads to
\begin{equation} \label{Eq_4.7}
   G(x,t)=  \frac{e^{ct}}{(2\sqrt{\pi t})^n\sqrt{\det A}}e^{-\frac{1}{4t}\left (A^{-1} ( x+tb) ,  x+tb \right )}\;.
\end{equation}

Since $(A^{-1}\zeta, \zeta)=(A^{-1/2}A^{-1/2}\zeta, \zeta)=
(A^{-1/2}\zeta, A^{-1/2}\zeta )=|A^{-1/2}\zeta|^2$ for any $\zeta \in {\mathbb R ^n}$ and $\det A=\big ( \det A^{1/2}\big )^2$, we can write (\ref{Eq_4.7}) as (\ref{Eq_3.3}). It follows from (\ref{Eq_4.5a}) and (\ref{Eq_4.6}) that the solution of problem (\ref{H}) can be represented as (\ref{Eq_3.2}),
where $G(x,t)$ is given by (\ref{Eq_3.3}).
\end{proof}

\begin{remark} {\bf 1.} Changing the variable 
$\xi= A^{-1/2}(x+tb)$ in the integral
$$
\int_{\mathbb R ^n} 
 e^ {-\frac{1}{4t}\left | A^{-1/2}(x+tb) \right |^2} dx\;,
$$
we arrive at the equality
\begin{equation} \label{Eq_4.8} 
\nl G(\cdot, t) \nr_{1}=\int_{\mathbb R ^n} G(x,t) dx=\frac{e^{ct}}{\big ( 2\sqrt{\pi t}\big )^n\det A^{1/2} }\int_{\mathbb R ^n} 
 e^ {-\frac{1}{4t}\left | A^{-1/2}(x+tb) \right |^2} dx=e^{ct},
\end{equation} 
which generalizes the analogous fact for the heat equation.
In particular, (\ref{Eq_0.1A}) follows from (\ref{Eq_4.8}).
\end{remark}

\medskip
The next assertion can be proved on the base of Lemma \ref{L_1}  similarly to the analogous statement for the heat 
equation (e.g.\cite{Hu}, Th. 5.5; \cite{MS},  Sect. 7.4). 

\begin{proposition} \label{P_1} Suppose that $1\leq p\leq \infty$
and $\varphi \in L^p({\mathbb R}^n)$. Define
$u: {\mathbb R}^{n+1}_T\rightarrow {\mathbb R}$
by $(\ref{Eq_3.2})$, where $G$ is given by $(\ref{Eq_3.3})$.
Then $u(x, t)$ is solution of the equation
$$
\frac{\partial u}{\partial t}= \sum_{j,k=1}^n a_{jk}\frac{\partial^2 u}{\partial x_j \partial x_k}+
\sum_{j=1}^n b_{j}\frac{\partial u}{\partial x_j}+c u
$$ 
in ${\mathbb R}^{n+1}_T$. If $1\leq p< \infty$, then 
$u(\cdot , t)\rightarrow \varphi$ in $L^p$ as $t \rightarrow 0^+$.
\end{proposition}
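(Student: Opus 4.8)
The plan is to establish Proposition~\ref{P_1} by the standard heat-kernel argument, adapted to the kernel $G(x,t)$ from Lemma~\ref{L_1}. First I would verify that $G$ itself is a classical solution of the homogeneous equation in ${\mathbb R}^{n+1}_T$. Since $G(x,t)$ is, up to the factor $e^{ct}$ and the drift shift $x\mapsto x+tb$, a Gaussian, all its $x$-derivatives and its $t$-derivative exist and are dominated, on any strip $0<\delta\le t\le T$, by integrable (indeed Schwartz-in-$x$) functions; one then differentiates under the integral sign in $(\ref{Eq_3.2})$ to move $\partial_t$ and the second-order operator inside, and checks that $G$ annihilates the equation by the same Fourier computation already carried out in Lemma~\ref{L_1} (or by a direct substitution). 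This dominated-convergence justification of differentiation under the integral is routine once one has the Gaussian bounds on $G$ and its derivatives, together with $\varphi\in L^p$ and H\"older's inequality to control $\int |\partial^\beta G(x-y,t)|\,|\varphi(y)|\,dy$.

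Second, for the initial-data claim when $1\le p<\infty$, I would use the approximate-identity structure of $G$. From Remark~1 we know $\nl G(\cdot,t)\nr_1=e^{ct}$, and a change of variables $y\mapsto A^{-1/2}(x-y+tb)/(2\sqrt t)$ shows that, after dividing by $e^{ct}$, the family $G(\cdot,t)/e^{ct}$ concentrates near the origin: for any fixed $\rho>0$, $\int_{|y|>\rho}G(y,t)\,dy\to 0$ as $t\to 0^+$ (the drift term $tb$ and the factor $e^{ct}\to1$ are harmless). Writing
\begin{equation*}
u(x,t)-e^{ct}\varphi(x)=\int_{{\mathbb R}^n}G(x-y,t)\big(\varphi(y)-\varphi(x)\big)\,dy,
\end{equation*}
one splits the integral into $|x-y|\le\rho$ and $|x-y|>\rho$ and applies the standard estimate for convolution with an approximate identity, using continuity of translation in $L^p$ (density of $C_c^\infty$ in $L^p$) for the near part and $\nl\varphi\nr_p\cdot\sup_{|z|>\rho}$-type bounds together with $\nl G(\cdot,t)\nr_1\to1$ for the far part; combined with $|e^{ct}-1|\,\nl\varphi\nr_p\to0$ this gives $u(\cdot,t)\to\varphi$ in $L^p$.

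I expect the main obstacle to be purely bookkeeping: producing uniform-in-$(x)$, locally-uniform-in-$t$ Gaussian majorants for $G$ and each of the derivatives $\partial_t G$, $\partial_{x_j}G$, $\partial_{x_j}\partial_{x_k}G$ that are good enough to (i) legitimize differentiation under the integral against an arbitrary $L^p$ density and (ii) survive the passage $t\to 0^+$ in the approximate-identity estimate. Everything else is the classical heat-equation proof with the cosmetic changes $2\sqrt{\pi t}\rightsquigarrow 2\sqrt{\pi t}\,(\det A^{1/2})^{1/n}$, $|x|^2\rightsquigarrow|A^{-1/2}(x+tb)|^2$, and the overall $e^{ct}$; since the excerpt explicitly says this statement "can be proved on the base of Lemma~\ref{L_1} similarly to the analogous statement for the heat equation," I would follow that route and simply indicate the two modifications (the $e^{ct}$ normalization and the drift shift) that are needed to reduce to the known case.
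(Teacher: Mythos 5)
Your proposal is correct and follows exactly the route the paper itself indicates: the paper gives no proof of Proposition~\ref{P_1}, stating only that it follows from Lemma~\ref{L_1} "similarly to the analogous statement for the heat equation" (citing Hunter and Shubin), and your adaptation — Gaussian majorants to differentiate under the integral, plus the approximate-identity argument with the harmless $e^{ct}$ normalization and vanishing drift shift $tb$ — is precisely that standard argument. The only cosmetic tightening would be to phrase the $L^p$ convergence via Minkowski's integral inequality, $\nl u(\cdot,t)-e^{ct}\varphi\nr_p\leq\int_{{\mathbb R}^n}G(z,t)\nl\varphi(\cdot-z)-\varphi\nr_p\,dz$, before splitting into $|z|\leq\rho$ and $|z|>\rho$.
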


\begin{remark} {\bf 2.}
It is known (e.g. \cite{FR}, Ch.1, Sect. 6, 7 and 9) that  (\ref{Eq_3.2}), where $G$ is given by (\ref{Eq_3.3}), represents a unique bounded solution of the Cauchy problem (\ref{H}) and $u(x , t)\rightarrow \varphi(x)$ as $t \rightarrow 0^+$ at any $x\in {\mathbb R}^n$ under assumption that $\varphi$ belongs to $C({\mathbb R}^n)\cap L^\infty ({\mathbb R}^n)$. This fact together with the Lusin's theorem 
(see \cite{VU}, Ch. VI, Sect. 6) implies that $u(\cdot , t)\rightarrow \varphi $ almost everywhere in ${\mathbb R}^n$ as $t \rightarrow 0^+$, where $u$ is a solution of problem (\ref{H}) with $\varphi \in L^\infty({\mathbb R}^n)$.
\end{remark}

\medskip
Further, let us consider the Cauchy problem (\ref{NH}) for the nonhomogeneous equation. 

The next statement can be proved analogously to the similar assertion for the heat equation (e.g. \cite{OO}, Sect. 4.8). 
In the proof of this assertion we preserve only the formal arguments leading to the representation for a solution of problem (\ref{NH}).

\begin{lemma} \label{L_2} Let $f(\cdot, t)\in {\mathcal S}({\mathbb R}^n)$ for any $t\in [0, T]$ and let 
the quantities $C_{\alpha, m}$ in the estimates
$$
\left (1+|x|^m \right )|\partial_x^\alpha f(x, t)|\leq C_{\alpha, m}
$$
are independent of $t$ for any integer $m\geq 0$ and multiindex $\alpha$.

The solution of problem $(\ref{NH})$  is given by
\begin{equation} \label{Eq_3.2a}
u(x, t)=\int_0^t\int_{{\mathbb R}^n}G(x-y, t-\tau) f(y, \tau)dyd\tau\;,
\end{equation}
where $G$ is defined by $(\ref{Eq_3.3})$.
\end{lemma}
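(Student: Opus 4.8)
\textbf{Proof proposal for Lemma \ref{L_2}.}

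The plan is to reduce the nonhomogeneous problem to the homogeneous one via Duhamel's principle, exactly as in the classical treatment of the heat equation. First I would introduce, for each fixed $s\in[0,T]$, the auxiliary function $v(x,t;s)$ solving the homogeneous Cauchy problem
\begin{eqnarray*}
\left\{\begin{array}{ll}
\displaystyle{\frac{\partial v}{\partial t}= \sum_{j,k=1}^n a_{jk}\frac{\partial^2 v}{\partial x_j \partial x_k}}+\sum_{j=1}^n b_{j}\frac{\partial v}{\partial x_j}
+c v& \quad{\rm in}\; {\mathbb R}^n\times(s, T), \\
       \\
\displaystyle{ v\big |_{t=s}=f(\cdot, s)}\;.
\end{array}\right .
\end{eqnarray*}
By Lemma \ref{L_1} applied on the shifted time interval (the coefficients are constant, so only the elapsed time $t-s$ enters), $v$ is represented by
$$
v(x,t;s)=\int_{{\mathbb R}^n}G(x-y,\,t-s)\,f(y,s)\,dy\;,
$$
which is legitimate since $f(\cdot,s)\in{\mathcal S}({\mathbb R}^n)$. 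Then I would set $u(x,t)=\int_0^t v(x,t;s)\,ds$, which is precisely the claimed formula $(\ref{Eq_3.2a})$, and verify that it solves $(\ref{NH})$.

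The verification has two parts. For the initial condition, $u(x,0)=\int_0^0(\cdots)\,ds=0$ trivially. For the equation, I would differentiate under the integral sign: by Leibniz's rule,
$$
\frac{\partial u}{\partial t}(x,t)=v(x,t;t)+\int_0^t\frac{\partial v}{\partial t}(x,t;s)\,ds\;,
$$
and the boundary term $v(x,t;t)=\lim_{s\to t^-}\int_{{\mathbb R}^n}G(x-y,t-s)f(y,s)\,dy$ equals $f(x,t)$ by the approximate-identity property of $G$, i.e. by the $L^\infty$ (indeed uniform) convergence $v(\cdot,t;s)\to f(\cdot,t)$ as $s\to t^-$ that follows from $(\ref{Eq_4.8})$ together with the continuity and decay of $f$ — this is where the hypotheses on $f(\cdot,t)\in{\mathcal S}({\mathbb R}^n)$ with $t$-uniform seminorm bounds are used. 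The spatial derivatives pass directly through the integral, so
$$
\sum_{j,k=1}^n a_{jk}\frac{\partial^2 u}{\partial x_j \partial x_k}+\sum_{j=1}^n b_{j}\frac{\partial u}{\partial x_j}+c u
=\int_0^t\left(\sum_{j,k=1}^n a_{jk}\frac{\partial^2 v}{\partial x_j \partial x_k}+\sum_{j=1}^n b_{j}\frac{\partial v}{\partial x_j}+c v\right)ds
=\int_0^t\frac{\partial v}{\partial t}(x,t;s)\,ds\;,
$$
using that each $v(\cdot,\cdot;s)$ solves the homogeneous equation. Combining the two displays gives $\partial_t u=f+\sum a_{jk}\partial^2_{jk}u+\sum b_j\partial_j u+cu$, as required.

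The main obstacle — and, consistent with the paper's stated convention for Lemmas \ref{L_1} and \ref{L_2}, the part to be handled only formally here — is justifying the differentiation under the integral sign and the limit $v(x,t;t)=f(x,t)$ rigorously near the diagonal $s=t$, where $G(x-y,t-s)$ is singular as $t-s\to0^+$. The uniform Schwartz-type bounds $(1+|x|^m)|\partial_x^\alpha f(x,t)|\le C_{\alpha,m}$, independent of $t$, are exactly what make the integrals $\int_0^t\!\int_{{\mathbb R}^n}|\partial_x^\alpha G(x-y,t-s)|\,|f(y,s)-f(x,s)|\,dy\,ds$ and their time-differentiated analogues absolutely convergent, so that the formal manipulations above are legitimate; I would invoke this in the same spirit as the cited references (\cite{OO}, Sect. 4.8) rather than reproduce the estimates. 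With those technicalities granted, the argument closes.
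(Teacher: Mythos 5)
Your proposal is correct in substance, but it follows a genuinely different route from the paper. The paper derives (\ref{Eq_3.2a}) entirely on the Fourier side: it transforms (\ref{NH}) in $x$, solves the resulting first-order ODE $\frac{d\hat u}{dt}=\{-(A\xi,\xi)+i(b,\xi)+c\}\hat u+\hat f$ by variation of constants, and then inverts the transform, recognizing the inner Gaussian integral as the same kernel $G$ already computed in Lemma \ref{L_1}; the steps deferred there are Fourier inversion and the interchange of the $\xi$-, $y$- and $\tau$-integrations. You instead use Duhamel's principle in physical space: for each $s$ you solve the homogeneous problem with data $f(\cdot,s)$ posed at time $s$ (legitimately invoking Lemma \ref{L_1}, since the coefficients are constant and only $t-s$ enters), superpose, and verify the equation via Leibniz's rule plus the approximate-identity property of $G$ as $t-s\to 0^+$. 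Your route makes the structure of the formula transparent and reuses Lemma \ref{L_1} as a black box, but it shifts the technical burden to differentiation under the integral and the diagonal limit $v(x,t;t)=f(x,t)$, where the kernel is singular; the paper's route keeps all manipulations algebraically elementary in Fourier variables and defers a different (arguably lighter) set of justifications. Both are formal derivations, consistent with the paper's announced convention for this lemma. One small caveat: your boundary-term argument uses continuity of $f$ in $t$ (and note that (\ref{Eq_4.8}) gives the mass $e^{c\tau}\to 1$ but concentration of $G$ near $x=-\tau b$ must also be invoked); continuity in $t$ is not literally among the stated hypotheses of Lemma \ref{L_2}, though it is harmless here since the paper's own formal argument implicitly assumes comparable $t$-regularity and the ambient setting of Section \ref{S_4} supplies it.
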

\begin{proof} Applying the Fourier transform to the Cauchy problem
(\ref{NH}), we obtain
\begin{equation} \label{Eq_4.3A}
\frac{d\hat{u}}{dt}=\big \{-(A\xi, \xi)+i (b, \xi)+c \big \}\hat{u}+\hat{f}(\xi, t)\;,\;\;\;\;\;\hat{u}(\xi,0)=0\;.
\end{equation}
The solution of problem (\ref{Eq_4.3A}) is
\begin{eqnarray} \label{Eq_4.4A}
\hat{u}(\xi, t)&=&e^{ \{-(A\xi, \xi))+i (b, \xi)+c  \}t}
\int_0^t \hat{f}(\xi, \tau)
e^{ \{(A\xi, \xi))-i (b, \xi)-c  \}\tau}d\tau\nonumber\\
&=&\int_0^t \hat{f}(\xi, \tau)
e^{ \{-(A\xi, \xi))+i (b, \xi)+c  \}(t-\tau)}d\tau\;.
\end{eqnarray}
By the inverse Fourier transform in (\ref{Eq_4.4A}), we have
\begin{eqnarray} \label{Eq_4.5A}
u(x, t)&=&\frac{1}{(2\pi)^{n/2}}\int_{\mathbb R ^n}\left \{\int_0^t \hat{f}(\xi, \tau)e^{ \{-(A\xi, \xi))+i (b, \xi)+c  \}(t-\tau)}d\tau \right \}e^{i(x, \xi)}d\xi\nonumber\\
&=&\frac{1}{(2\pi)^{n/2}}\int_0^t\left \{\int_{\mathbb R ^n} e^{i(x, \xi)} \hat{f}(\xi, \tau)e^{ \{-(A\xi, \xi))+i (b, \xi)+c  \}(t-\tau)} d\xi \right \}d\tau\nonumber\\
&=&\frac{1}{(2\pi)^n}\int_0^t\left \{\int_{\mathbb R ^n} 
e^{i(x, \xi)} 
\left \{ \int_{\mathbb R ^n}e^{-i(y, \xi)}f(y, \tau)dy \right \}
e^{ \{-(A\xi, \xi))+i (b, \xi)+c  \}(t-\tau)} d\xi \right \}d\tau
\nonumber\\
&=&\int_0^t \int_{\mathbb R ^n}
\left \{\frac{e^{c(t-\tau)}}{(2\pi)^n} \int_{\mathbb R ^n}e^{i(x-y+ib(t-\tau ), \xi)}
e^{ -(A\xi, \xi)(t-\tau)}d\xi\right \}f(y, \tau)dy d\tau.
\end{eqnarray}
By (\ref{Eq_4.6}), expression inside of the braces in the right-hand side of (\ref{Eq_4.5A}) is equal to $G(x-y, t-\tau)$.
So, equality (\ref{Eq_4.5A}) can be written as (\ref{Eq_3.2a}), where $G$ is given by (\ref{Eq_3.3}).
\end{proof}

\begin{remark} {\bf 3.}
It is known (e.g. \cite{FR}, Ch.1, Sect. 7 and 9) that formula (\ref{Eq_3.2a}), where $G$ is defined by (\ref{Eq_3.3}), solves
problem  (\ref{NH}) with $f\in C^\alpha\big (\overline{{\mathbb R}^{n+1}_T} \big )$.

Let $u(x, t)$ be a solution of problem (\ref{NH}) with $f \in C^\alpha\big (\overline{{\mathbb R}^{n+1}_T} \big )$ 
and $(x, t)\in {\mathbb R}^{n+1}_T$. Then, by (\ref{Eq_4.8}) and (\ref{Eq_3.2a}), we arrive at the sharp pointwise estimates
$$
|u(x, t)|\leq \frac{e^{ct}-1}{c}\nl f \nr_{\infty, t}\;,
$$
where $c\neq 0$, and
$$
|u(x, t)|\leq t\nl f \nr_{\infty, t}\;,
$$
where $c=0$.
  
The last estimate is well known for solutions of the Cauchy problem with zero initial data for the nonhomogeneous heat equation (e.g. \cite{VL}, Ch. III, Sect. 16). 
\end{remark}

%%%%%%%%%%%%%%%%%%%%%%%%%%%%%%%%%%%%%%%%%%%%%%%%%%%%%%%%%%%%%%%%%%
\section{Estimates for solutions of the homogeneous equation} \label{S_3}
%%%%%%%%%%%%%%%%%%%%%%%%%%%%%%%%%%%%%%%%%%%%%%%%%%%%%%%%%%%%%%%%%%

In this section we consider the Cauchy problem (\ref{H}).
Here we suppose that $\varphi\in L^p({\mathbb R}^n)$, where $p \in [1, \infty]$.

\setcounter{theorem}{0}
\begin{theorem} \label{T_1} Let $(x, t)$ be an arbitrary point in ${\mathbb R}^{n+1}_T$ and $u $ be solution of problem $(\ref{H})$. 
The sharp coefficient ${\mathcal K}_{p, \ell}(t)$ in inequality
$(\ref{Eq_1.3A})$ is given by $(\ref{Eq_1.3})$.

As a consequence, the sharp coefficient ${\mathcal K}_p(t)$ in inequality $(\ref{Eq_1.3ABC})$ is given by
\begin{equation} \label{Eq_3.7}
{\mathcal K}_p(t)=\frac{\bv A^{-1/2} \bv }
{\big \{ 2^{n}\pi^{(n+p-1)/2}\det A^{1/2} \big \}^{1/p}}
\left \{ \frac{\Gamma \left (\frac{p'+1}{2} \right )} {p'^{(n+p')/2}} \right \}^{1/p'}\;\frac{e^{ct}}{t^{(n+p)/(2p)}}\;.
\end{equation}

As a special case of $(\ref{Eq_3.7})$ 
one has $(\ref{Eq_1.4})$.
\end{theorem}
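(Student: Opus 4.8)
The plan is to reduce the estimate for the directional derivative to a sharp $L^p$–$L^{p'}$ duality computation for the kernel $G$. First I would differentiate the representation formula $(\ref{Eq_3.2})$ from Lemma~\ref{L_1} (valid for $\varphi\in\mathcal{S}$, and extended to $\varphi\in L^p$ via Proposition~\ref{P_1} and a density/approximation argument) under the integral sign to obtain
\begin{equation} \label{plan_1}
\frac{\partial u}{\partial \ell}(x,t)=\int_{{\mathbb R}^n}\frac{\partial G}{\partial \ell}(x-y,t)\,\varphi(y)\,dy .
\end{equation}
By H\"older's inequality, $|\partial u/\partial\ell(x,t)|\le \nl \partial G(\cdot,t)/\partial\ell\nr_{p'}\,\nl\varphi\nr_p$, and this bound is sharp at each fixed $(x,t)$ because one may choose $\varphi$ (or an $L^p$-approximating sequence) realizing equality in H\"older, so the sharp constant is exactly ${\mathcal K}_{p,\ell}(t)=\nl \partial G(\cdot,t)/\partial\ell\nr_{p'}$, independently of $x$ by translation invariance.

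The core computation is then evaluating this $L^{p'}$ norm. From $(\ref{Eq_3.3})$,
\begin{equation} \label{plan_2}
\frac{\partial G}{\partial \ell}(x,t)=-\frac{e^{ct}}{\big(2\sqrt{\pi t}\big)^n\det A^{1/2}}\,\frac{\big(A^{-1}(x+tb),\ell\big)}{2t}\,e^{-\frac{1}{4t}|A^{-1/2}(x+tb)|^2}.
\end{equation}
I would substitute $z=A^{-1/2}(x+tb)/(2\sqrt{t})$, which turns the Gaussian into $e^{-|z|^2}$, turns the linear factor into a multiple of $(z,A^{-1/2}\ell)$, and produces a Jacobian $\big(2\sqrt{t}\big)^n\det A^{1/2}$. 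After collecting powers of $t$, $\pi$, $2$, $\det A^{1/2}$ and $e^{ct}$, the $L^{p'}$ norm reduces to the scalar integral $\int_{{\mathbb R}^n}|(z,w)|^{p'}e^{-p'|z|^2}\,dz$ with $w=A^{-1/2}\ell/|A^{-1/2}\ell|$ a unit vector. By rotational symmetry this equals $|A^{-1/2}\ell|^{p'}$ times $\int_{{\mathbb R}^n}|z_1|^{p'}e^{-p'|z|^2}\,dz$, which factors as a one–dimensional $\int_{\mathbb R}|z_1|^{p'}e^{-p'z_1^2}dz_1$ times $(n-1)$ copies of $\int_{\mathbb R}e^{-p's^2}ds$; both are standard Gamma integrals, and assembling them yields the bracket $\Gamma\big(\tfrac{p'+1}{2}\big)/p'^{(n+p')/2}$ raised to $1/p'$ as in $(\ref{Eq_1.3})$. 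The case $p=1$ (so $p'=\infty$) is handled separately by taking the essential supremum of $(\ref{plan_2})$ over $x$, maximizing the function $|s|e^{-s^2/\cdots}$.

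Once ${\mathcal K}_{p,\ell}(t)$ is established, $(\ref{Eq_3.7})$ follows immediately from $(\ref{Eq_1.3AB})$: in $(\ref{Eq_1.3})$ only the factor $|A^{-1/2}\ell|$ depends on $\ell$, and $\max_{|\ell|=1}|A^{-1/2}\ell|=\bv A^{-1/2}\bv$ by definition of the spectral norm. Finally $(\ref{Eq_1.4})$ is the specialization $p=\infty$, $p'=1$: then $\Gamma(1)=1$, $p'^{(n+p')/2}=1$, the prefactor $\{2^n\pi^{(n-1)/2}\det A^{1/2}\}^{1/\infty}=1$, and one is left with $\bv A^{-1/2}\bv\,\pi^{-1/2}e^{ct}t^{-1/2}$ after checking that the $\Gamma$-integral constants collapse correctly. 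The main obstacle I anticipate is purely bookkeeping: keeping every power of $t$, $2$, $\pi$, and $\det A^{1/2}$ consistent through the substitution and the two separate Gamma integrals so that the final constant matches $(\ref{Eq_1.3})$ exactly, together with a careful justification that differentiation under the integral in $(\ref{plan_1})$ and the passage from $\mathcal{S}$ to general $\varphi\in L^p$ are legitimate and that the H\"older bound is attained (in the limiting $L^1$ and $L^\infty$ cases by an approximating sequence rather than an actual extremal).
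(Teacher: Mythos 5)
Your proposal is correct and follows essentially the same route as the paper: differentiate the representation formula, identify the sharp constant as the $L^{p'}$ norm of $\partial G/\partial\ell$ via H\"older duality, substitute $\xi=A^{-1/2}(x-y+tb)$ (up to your extra rescaling by $2\sqrt{t}$), use symmetry of $A^{-1/2}$ to isolate $|A^{-1/2}\ell|$, and then pass to (\ref{Eq_3.7}) and (\ref{Eq_1.4}) via the spectral norm and $p=\infty$. The only difference is cosmetic: you evaluate the Gaussian moment by rotating and factoring into one-dimensional Gamma integrals, while the paper uses spherical coordinates with a Beta-function identity; both yield the same bracket $\Gamma\bigl(\tfrac{p'+1}{2}\bigr)/p'^{(n+p')/2}$.
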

\begin{proof} By (\ref{Eq_3.2}), (\ref{Eq_3.3}) and (\ref{Eq_4.7}), we have
\begin{equation} \label{Eq_3.8}
\frac{\partial u}{\partial x_j}=\int_{{\mathbb R}^n}\frac{\partial}{\partial x_j}G(x-y, t)\varphi(y)dy\;,
\end{equation}
where 
\begin{equation} \label{Eq_3.8C}
G(x,t)= \frac{e^{ct}}{\big ( 2\sqrt{\pi t}\big )^n\det A^{1/2} } e^ {-\frac{1}{4t}\left | A^{-1/2}(x+tb) \right |^2}= 
\frac{e^{ct}}{(2\sqrt{\pi t})^n\det A^{1/2}}e^{-\frac{1}{4t}\left (A^{-1} ( x+tb) ,  x+tb \right )}\;.
\end{equation}
Differentiating in (\ref{Eq_3.8C}) with respect to $x_j$, $j=1,\dots , n$, we obtain
\begin{equation} \label{Eq_3.8A}
\frac{\partial}{\partial x_j}G(x-y, t)\!=-
\frac{e^{ct}}{2t\big ( 2\sqrt{\pi t}\big )^n\det A^{1/2} }\!
\left \{ A^{-1}( x\!-\!y\!+\! t b)\right \}_j  
e^ {-\frac{1}{4t}\left | A^{-1/2}(x-y+tb) \right |^2},
\end{equation}
which together with (\ref{Eq_3.8}), leads to
\begin{equation} \label{Eq_3.9}
\frac{\partial u}{ \partial { \ell}}\!=-\frac{e^{ct}}{2t\big ( 2\sqrt{\pi t}\big )^n\det A^{1/2} }
\int_{{\mathbb R}^n}\!\!\left ( A^{-1}( x\!-\!y\!+\! t b) , \ell \right )
e^ {-\frac{1}{4t}\left | A^{-1/2}(x-y+tb) \right |^2} \varphi(y)dy .
\end{equation}

Applying the H\"older inequality to the right-hand side of (\ref{Eq_3.9}), we conclude that the sharp coefficient in  estimate (\ref{Eq_1.3A})
is given by
\begin{equation} \label{Eq_3.12}
{\mathcal K}_{p, \bs\ell}(t)=\frac{e^{ct}}{2t\big ( 2\sqrt{\pi t}\big )^n\det A^{1/2} }
\left \{\int_{{\mathbb R}^n}\left |\left ( A^{-1}( x\!-\!y\!+\! t b) , \ell \right )\right |^{p'}
e^ {-\frac{p'}{4t}\left | A^{-1/2}(x-y+tb) \right |^2}dy \right \}^{\frac{1}{p'}}.
\end{equation}

Further, we introduce the new variable
$$
\xi= A^{-1/2}(x-y+tb)\;.
$$
Since $y=-A^{1/2}\xi +x+t b$, we have $dy =\det A^{1/2}d\xi$, which
together with (\ref{Eq_3.12}) leads to the following representation
$$
{\mathcal K}_{p, \ell}(t)=\frac{e^{ct}(\det A^{1/2})^{1/p'}}{2t\big ( 2\sqrt{\pi t}\big )^n\det A^{1/2} }
\left \{\int_{{\mathbb R}^n}\left |\left (A^{-1/2}\xi ,\ell \right )\right |^{p'}
e^{-\frac{p' | \xi|^2 }{4t}}d\xi \right \}^{\frac{1}{p'}}.
$$
By the  symmetricity of $A^{-1/2}$, we have
\begin{equation} \label{Eq_3.11ABCD}
{\mathcal K}_{p, \ell}(t)=\frac{e^{ct}}{2t\big ( 2\sqrt{\pi t}\big )^n(\det A^{1/2})^{1/p} }
\left \{\int_{{\mathbb R}^n}\left |\left (\xi , A^{-1/2}\ell \right )\right |^{p'}
e^{-\frac{p' |\xi|^2}{4t}}d\xi \right \}^{\frac{1}{p'}}.
\end{equation}

Passing to the spherical coordinates in (\ref{Eq_3.11ABCD}), we obtain
\begin{equation} \label{Eq_3.12A}
{\mathcal K}_{p, \ell}(t)\!=\!\frac{e^{ct}}{2t\big ( 2\sqrt{\pi t}\big )^n (\det A^{1/2})^{1/p}}  
\left \{\int_0^{\infty} \rho^{p'+n-1}e^{-\frac{p'\rho^2 }{4t}} d\rho\!\!\int_{{\mathbb S}^{n-1}} \left |
\big ( e_\sigma , A^{-1/2}\ell \big ) \right |^{p'} d\sigma
\right \}^{1/p'}\;,
\end{equation}
where $e_\sigma$ is the $n$-dimensional unit vector joining the
origin to a point $\sigma$ of the unit sphere ${\mathbb S}^{n-1}$
in ${\mathbb R}^n$.

Let $\vartheta$ be the angle between $ e_{\sigma}$ and $A^{-1/2}\ell $. We have
\begin{eqnarray} \label{Eq_3.13}
& &\int_{{\mathbb S}^{n-1}} \big |\big ( e_{\sigma}, A^{-1/2}\ell \big ) \big |^{p'} d\sigma=2\omega_{n-1}
\big |A^{-1/2}\ell \big |^{p'}\int_0^{\pi/2} 
\cos^{p'} \vartheta  \sin^{n-2}\vartheta d\vartheta\nonumber\\
& &\nonumber\\
& &=\omega_{n-1}\big |A^{-1/2}\ell \big |^{p'} B\left ( \frac{p'+1}{2}, \frac{n-1}{2} \right )
=\big |A^{-1/2}\ell \big |^{p'}\:\frac{2\pi^{(n-1)/2}\Gamma \left ( \frac{p'+1}{2} \right )}
{\Gamma \left ( \frac{n+p'}{2} \right )}\;.
\end{eqnarray}
Further, making the change of variable $\rho=\sqrt{u}$ in the integral 
$$
\int_0^{\infty}\rho^{p'+n-1} e^{-\frac{p'\rho^2 }{4t }} d\rho
$$
and using the formula (e.g. \cite{GRJ}, 3.381, item 4) 
\begin{equation} \label{Eq_3.13A}
\int_0^\infty x^{\alpha-1}e^{-\beta x}dx=\beta^{-\alpha}\Gamma(\alpha)
\end{equation}
with positive $\alpha$ and $\beta$, we obtain
\begin{equation} \label{Eq_3.14}
\int_0^{\infty} \rho^{p'+n-1} e^{-\frac{p'\rho^2 }{4t}} d\rho=\frac{1}{2}\int_0^{\infty}u^{\frac{p'+n}{2}-1}e^{-\frac{p'}
{4t }u}du=
\frac{1}{2}\left ( \frac{4t}{p'} \right )^{\frac{p'+n}{2}}{\Gamma \left ( \frac{n+p'}{2} \right )}\;.
\end{equation}
Combining (\ref{Eq_3.13}) and (\ref{Eq_3.14}) with (\ref{Eq_3.12A}), we arrive at (\ref{Eq_1.3}). 

Formula (\ref{Eq_3.7}) follows from (\ref{Eq_1.3}) and (\ref{Eq_1.3AB}).
As a particular case of (\ref{Eq_3.7}) with $p=\infty$, we obtain (\ref{Eq_1.4}).
\end{proof}

%%%%%%%%%%%%%%%%%%%%%%%%%%%%%%%%%%%%%%%%%%%%%%%%%%%%%%%%%%%%%%%%%%
\section{Estimates for solutions of the nonhomogeneous equation} \label{S_4}
%%%%%%%%%%%%%%%%%%%%%%%%%%%%%%%%%%%%%%%%%%%%%%%%%%%%%%%%%%%%%%%%%%

In this section we consider the Cauchy problem (\ref{NH}) for the nonhomogeneous equation. Here we suppose that $f\in L^p({\mathbb R}^{n+1}_T)\cap C^\alpha\big (\overline{{\mathbb R}^{n+1}_T} \big ) $, where $p>n+2$ and $\alpha \in (0, 1)$.

\begin{theorem} \label{T_2} Let $(x, t)$ be an arbitrary point in ${\mathbb R}^{n+1}_T$ and let $u $ solve  problem $(\ref{NH})$.  
The sharp coefficient ${\mathcal C}_{p,\ell}(t)$ in inequality
$(\ref{Eq_1.5A})$ is given by $(\ref{Eq_1.5})$.

As a consequence of $(\ref{Eq_1.5})$, the sharp coefficient  
${\mathcal C}_{p}(t)$ in inequality $(\ref{Eq_1.5ABC})$ is given by
\begin{equation} \label{Eq_3.17A}
{\mathcal C}_p(t)=\frac{\bv A^{-1/2} \bv }
{\big \{ 2^{n}\pi^{(n+p-1)/2}\det A^{1/2} \big \}^{1/p}}\left \{ \frac{\Gamma \left (\frac{p'+1}{2} \right )} {p'^{(n+p')/2}} \int_0^t \frac{e^{p'c\tau}}{\tau ^{(n(p'-1)+p' )/2}}d\tau\right \}^{1/p'}.
\end{equation}

As a special case of $(\ref{Eq_3.17A})$ one has $(\ref{Eq_1.6})$.
\end{theorem}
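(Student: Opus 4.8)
The plan is to mirror the structure of the proof of Theorem~\ref{T_1}, but now starting from the representation formula (\ref{Eq_3.2a}) for the nonhomogeneous problem instead of (\ref{Eq_3.2}). First I would differentiate (\ref{Eq_3.2a}) under the integral sign with respect to $x_j$, using (\ref{Eq_3.8A}) to express $\partial_{x_j}G(x-y,t-\tau)$, so that
\begin{equation*}
\frac{\partial u}{\partial \ell}(x,t)=-\int_0^t\frac{e^{c(t-\tau)}}{2(t-\tau)\big(2\sqrt{\pi(t-\tau)}\big)^n\det A^{1/2}}\int_{{\mathbb R}^n}\big(A^{-1}(x-y+(t-\tau)b),\ell\big)\,e^{-\frac{1}{4(t-\tau)}|A^{-1/2}(x-y+(t-\tau)b)|^2}f(y,\tau)\,dy\,d\tau.
\end{equation*}
Regarding $(y,\tau)\mapsto$ (the kernel) as an element of $L^{p'}({\mathbb R}^{n+1}_t)$ and applying H\"older's inequality in the variables $(y,\tau)$ jointly, the sharp coefficient ${\mathcal C}_{p,\ell}(t)$ equals the $L^{p'}$-norm of that kernel, i.e.
\begin{equation*}
{\mathcal C}_{p,\ell}(t)=\left\{\int_0^t\frac{e^{p'c(t-\tau)}}{\big(2(t-\tau)\big)^{p'}\big(2\sqrt{\pi(t-\tau)}\big)^{np'}(\det A^{1/2})^{p'}}\int_{{\mathbb R}^n}\big|\big(A^{-1}(x-y+(t-\tau)b),\ell\big)\big|^{p'}e^{-\frac{p'}{4(t-\tau)}|A^{-1/2}(x-y+(t-\tau)b)|^2}\,dy\,d\tau\right\}^{1/p'}.
\end{equation*}

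Next, for each fixed $\tau$ I would carry out exactly the inner change of variables and spherical-coordinate computation already done in Theorem~\ref{T_1}: substitute $\xi=A^{-1/2}(x-y+(t-\tau)b)$ (Jacobian $\det A^{1/2}$), use symmetry of $A^{-1/2}$ to rewrite $(A^{-1}(x-y+(t-\tau)b),\ell)=(\xi,A^{-1/2}\ell)$, pass to spherical coordinates, and evaluate the angular integral via (\ref{Eq_3.13}) and the radial integral via (\ref{Eq_3.13A}) with $t$ replaced by $t-\tau$. This shows that the inner double integral over ${\mathbb R}^n$, after all constants are collected, contributes a factor proportional to $|A^{-1/2}\ell|^{p'}(t-\tau)^{-(n(p'-1)+p')/2}$ times $e^{p'c(t-\tau)}$; substituting $\sigma=t-\tau$ in the remaining $\tau$-integral turns $\int_0^t(\cdot)\,d\tau$ into $\int_0^t e^{p'c\sigma}\sigma^{-(n(p'-1)+p')/2}\,d\sigma$, which is precisely the integral appearing in (\ref{Eq_1.5}). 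Collecting the powers of $2$, $\pi$ and $\det A^{1/2}$ exactly as in Theorem~\ref{T_1} yields (\ref{Eq_1.5}). Then (\ref{Eq_3.17A}) follows by maximizing over $|\ell|=1$, since $\max_{|\ell|=1}|A^{-1/2}\ell|=\bv A^{-1/2}\bv$, and (\ref{Eq_1.6}) is the case $p=\infty$, $p'=1$.

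Two points deserve care rather than being routine. First, the convergence of the $\tau$-integral: near $\tau=t$ the integrand behaves like $(t-\tau)^{-(n(p'-1)+p')/2}$, so the $L^{p'}$ quantity inside the braces is finite precisely when $(n(p'-1)+p')/2<1$, i.e. $p'<2/(n+1)+1$, equivalently $p>n+2$; this is exactly where the hypothesis $p>n+2$ enters, and I would remark on it explicitly. (For $p=\infty$ the integrand near $\tau=t$ is $(t-\tau)^{-1/2}$, which is integrable for every $n$, consistent with (\ref{Eq_1.6}).) Second, and this is the main obstacle, one must justify that the H\"older bound is \emph{sharp}, i.e. that the constant cannot be diminished. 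As in Theorem~\ref{T_1} this requires exhibiting (or approximating by) functions $f$ in $L^p({\mathbb R}^{n+1}_T)\cap C^\alpha(\overline{{\mathbb R}^{n+1}_T})$ for which equality is essentially attained at a chosen point $(x,t)$; the natural candidate is $f(y,\tau)=|{\rm (kernel)}|^{p'-1}\operatorname{sgn}(\cdot)$ suitably truncated and mollified to lie in the H\"older space while keeping the ratio of the two sides arbitrarily close to $1$. Verifying that this truncation/mollification costs nothing in the limit, and that differentiation under the integral in (\ref{Eq_3.2a}) is legitimate for such $f$ (here Remark~3 and the cited results from \cite{FR} on the solvability of (\ref{NH}) for $f\in C^\alpha$ are used), is the technical heart of the argument; the rest is the same computation as in Section~\ref{S_3}.
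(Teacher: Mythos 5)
Your proposal is correct and follows essentially the same route as the paper's own proof: differentiate (\ref{Eq_3.2a}) under the integral via (\ref{Eq_3.8A}), apply H\"older's inequality jointly in $(y,\tau)$, substitute $\xi=A^{-1/2}(x-y+(t-\tau)b)$, pass to spherical coordinates and evaluate the angular and radial integrals by (\ref{Eq_3.13}) and (\ref{Eq_3.14}), then change $\tau\mapsto t-\tau$; your extra remarks on where $p>n+2$ enters and on realizing sharpness by truncated/mollified dual functions only make explicit what the paper leaves implicit. One small slip: the condition $(n(p'-1)+p')/2<1$ gives $p'<1+\frac{1}{n+1}=\frac{n+2}{n+1}$, not $p'<1+\frac{2}{n+1}$, though your final equivalence $p>n+2$ is correct.
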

\begin{proof} By (\ref{Eq_3.8A}) and (\ref{Eq_3.2a}), we have
\begin{equation} \label{Eq_3.17CD}
\frac{\partial u }{ \partial {\ell}}=\int_0^t
\int_{{\mathbb R}^n}\big ( \nabla_x  G(x-y, t-\tau), \ell  \big )f(y, \tau)dy d\tau,
\end{equation}
where
$$
\big ( \nabla_x  G(x\!-\!y, t\!-\!\tau \!), \ell  \big )\!=-c_n\frac{e^{c(t-\tau)}}{(t-\tau)^{(n+2)/2} }
\!\left ( A^{-1}( x\!-\!y\!+\! (t-\tau) b) , \ell \right )
e^ {-\frac{1}{4(t-\tau)}\left | A^{-1/2}(x-y+(t-\tau)b) \right |^2}.
$$
Here
\begin{equation} \label{Eq_3.18}
c_n=\frac{1}{2^{n+1}\pi^{n/2}\det A^{1/2}}\;.
\end{equation}

Applying the H\"older inequality to the right-hand side of (\ref{Eq_3.17CD}), we conclude that the sharp coefficient 
${\mathcal C}_{p, \ell }(t)$ in the estimate (\ref{Eq_1.5A}) is given by
$$
{\mathcal C}_{p,\ell }(t)\!=\!\! c_n\!\!\left \{ \int_0^t\int_{{\mathbb R}^n}\!\frac{e^{p'c(t\!-\!\tau)}}{\big ( t-\tau\big )^{(n+2)p'/2} }
\big |\!\left ( A^{-1}( x\!-\!y\!+\! (t-\tau) b) , \ell \right )\!\big |^{p'} e^{-\frac{p'\left | A^{-1/2}(x-y+(t-\tau)b) \right |^2}{4 (t-\tau)}}
dy d\tau \!\right \}^{\frac{1}{p'}}\!\!. 
$$

Changing the variable $\tau=t-\eta$, we rewrite the previous representation for ${\mathcal C}_{p,\ell }(t)$ as
\begin{equation} \label{Eq_3.19A}
{\mathcal C}_{p,\ell }(t)\!=\!\! c_n\!\!\left \{ \int_0^t\int_{{\mathbb R}^n}\frac{e^{p'c\eta}}{\eta^{(n+2)p'/2} }
\big |\!\left ( A^{-1}( x\!-\!y\!+\! \eta b) , \ell \right )\!\big |^{p'} e^{-\frac{p'\left | A^{-1/2}(x-y+\eta b) \right |^2}{4 \eta }}
dy d\eta \!\right \}^{\frac{1}{p'}}. 
\end{equation}
Now, we introduce the new variable  
$$ 
\xi=A^{-1/2}(x-y)+\eta b
$$ 
in the inner integral in the right-hand side of (\ref{Eq_3.19A}). By the symmetricity of $A^{-1/2}$ and by the relation $dy = \det A^{1/2}d\xi$, we arrive at
\begin{equation} \label{Eq_3.20}
{\mathcal C}_{p,\bs\ell }(t)\!=\!\!c_n\left (\det A^{1/2} \right )^{1/p'} 
\left \{ \int_0^t\int_{{\mathbb R}^n}\frac{e^{cp'\eta }} {\eta^{(n+2)p'/2} }
\big |\big (\xi , A^{-1/2}\ell \big )\big |^{p'}
 e^{-\frac{p' | \xi |^2 }{4\eta}}d\xi
 d\eta \!\right \}^{\frac{1}{p'}}\!\!. 
\end{equation}

Passing to the spherical coordinates in (\ref{Eq_3.20}), we obtain 
$$
{\mathcal C}_{p, \bs\ell}(t)\!=
\!c_n\!\left (\det A^{1/2} \right )^{1/p'} 
 \left \{\! \int_0^t\!\!\frac{e^{cp'\eta }} {\eta^{(n+2)p'/2} } \! \left (\int_0^\infty \!\!
\rho^{n+p'-1}  e^{-\frac{p' \rho^2}{4\eta }}d\rho \right )\!d\eta\!\!
\int_{{\mathbb S}^{n-1}}\left | \big ( e_{\sigma}, A^{-1/2}\ell \big )\right |^{p'} d\sigma \! \right \}^{\frac{1}{p'}}\!\!.
$$ 
The last equality together with (\ref{Eq_3.13}), (\ref{Eq_3.14}) and (\ref{Eq_3.18}) leads to
\begin{equation} \label{Eq_3.21}
{\mathcal C}_{p, \ell}(t)\!=\frac{\big |A^{-1/2}\ell \big |}
{\big \{ 2^{n}\pi^{(n+p-1)/2}\det A^{1/2} \big \}^{1/p}}\left \{ \frac{\Gamma \left (\frac{p'+1}{2} \right )} {p'^{(n+p')/2}} \int_0^t \frac{e^{p'c\tau}}{\tau ^{(n(p'-1)+p' )/2}}d\tau\right \}^{1/p'},
\end{equation}
where the integral converges for $p>n+2$. Thus, formula (\ref{Eq_1.5}) is proved.

Equality  (\ref{Eq_3.17A}) follows from (\ref{Eq_3.21}) and (\ref{Eq_1.5AB}). 
Putting $p=\infty $ in (\ref{Eq_3.17A}), we arrive at (\ref{Eq_1.6}). 
\end{proof}

\medskip
{\bf Acknowledgement.} The publication has been prepared with the support of the "RUDN University Program 5-100".

%%%%%%%%%%%%%%%%%%%%%%%%%%%%%%%%%%%%%%%%%%%%%%%%%%%%%%%%%%%%%%%%%%

%%%%%%%%%%%%%%%%%%%%%%%%%%%%%%%%%%%%%%%%%%%%%%%%%%%%%%%%%%%%%%%%%%%%%%%%
\end{document}